\long\def\comment#1\endcomment{}
\theoremstyle{plain}
\newtheorem{theorem}{{\sc Theorem}}[section]
\newtheorem{lemma}[theorem]{\sc Lemma}
\newtheorem{prop}[theorem]{\sc Proposition}
\newtheorem{coroll}[theorem]{\sc Corollary}
\theoremstyle{plain}
\newtheorem{defn}[theorem]{\sc Definition}
\theoremstyle{exercise}
\makeatletter \@addtoreset{equation}{section} \makeatother
\def\eqref#1{\thetag{\ref{#1}}}
\let\latexref=\ref
\def\ref#1{{\normalfont{\latexref{#1}}}}
\newcommand{\hdot}{{\:\raisebox{3pt}{\text{\circle*{1.5}}}}}
\def\dlim_#1{{\displaystyle\lim_{#1}}^\hdot}
\newcommand{\id}{\operatorname{\rm id}}
\newcommand{\eqto}{\mathrel{\stackrel{\sim}{\to}}}
\newcommand{\Mor}{\mathrm{Mor}}
\newcommand{\Ob}{\mathrm{Ob}}
\newcommand{\opp}{\mathrm{opp}}
\newcommand{\fin}{\mathrm{fin}}
\newcommand{\Mon}{\boldsymbol{\mathrm{Mon}}}
\newcommand{\Ho}{\mathrm{Ho}}
\newcommand{\Id}{\mathrm{Id}}
\newcommand{\mon}{\mathrm{mon}}
\newcommand{\Sets}{\mathbf{Sets}}
\newcommand{\maxx}{\mathrm{max}}
\newcommand{\sevafigc}[4]{\begin{figure}[h]\centerline{
 \epsfig{file=#1,width=#2,angle=#3}}
\bigskip\caption{#4}\end{figure}}
\title{\sc{A bialgebra axiom\\ and the Dold-Kan correspondence}}
\author{\sc{Boris Shoikhet}}
\date{}
\begin{document}\maketitle
{\footnotesize
\begin{center}{\parbox{4,5in}{{\sc Abstract.} We introduce a {\it bialgebra axiom} for a pair $(c,\ell)$ of a colax-monoidal and a lax-monoidal structures on a functor $F\colon \mathscr{M}_1\to \mathscr{M}_2$ between two (strict) symmetric monoidal categories. This axiom can be regarded as a weakening of the property of $F$ to be a strict symmetric monoidal functor. We show that this axiom transforms well when passing to the adjoint functor or to the categories of monoids. Rather unexpectedly, this axiom holds for the Alexander-Whitney colax-monoidal and the Eilenberg-MacLane lax-monoidal structures on the normalized chain complex functor in the  Dold-Kan correspondence. This fact, proven in Section 2, opens up a way for many applications, which we will consider in our sequel paper(s).}}
\end{center}
}

\bigskip
\bigskip

\section*{\sc Introduction}
\subsection{\sc }
This paper appeared in the author's attempt to understand the theory of weak monoidal Quillen pairs of Schwede and Shipley [SchS03].
In that paper, the authors have two symmetric monoidal categories $\mathscr{M}_1$ and $\mathscr{M}_2$, both with compatible closed model structures, and study when a Quillen equivalence $L\colon \mathscr{M}_1\rightleftarrows \mathscr{M}_2\colon R$ defines an equivalence on the homotopy categories of monoids $\Ho\Mon\mathscr{M}_1\eqto\Ho\Mon\mathscr{M}_2$.

The question becomes more complicated when one does not suppose that either $L$ or $R$ is a strict monoidal functor, but they have weaker (co)lax-monoidal structures. This complication is not an artificial one, as it is the case with the main illustrating examples, the Dold-Kan correspondence. The Dold-Kan correspondence is an adjoint equivalence
$$
N \colon \mathscr{M}od(\mathbb{Z})^\Delta\rightleftarrows \mathscr{C}(\mathbb{Z})^-\colon \Gamma
$$
between the categories of simplicial abelian groups and non-positively graded complexes of abelian groups.

The both categories are symmetric monoidal, but the Dold-Kan correspondence is not compatible with the symmetric monoidal structures. In fact, the normalized chain complex functor admits the Alexander-Whitney colax-monoidal structure and the Eilenberg-MacLane shuffle lax-monoidal structure.

A functor $R\colon \mathscr{M}_2\to \mathscr{M}_1$ between symmetric monoidal categories, and a lax-monoidal structure on $R$, define a functor $R^\mon\colon \Mon\mathscr{M}_2\to\Mon \mathscr{M}_1$ between their categories of monoids, in a natural way. A colax-monoidal structure on $F$ is used to construct a lax-monoidal structure on the left adjoint to $R$ (it is exists), and finally to construct a left adjoint to $R^\mon$.

The author's intention was to separate the ``linear algebra picture'', consisting of these various adjoint with (co)lax-monoidal structures on them, with the ``homotopy picture'', given by closed model structures.

In the ``linear algebra part'' of the picture the both categories $\Mon\mathscr{M}_1$ and $\Mon\mathscr{M}_2$ are symmetric monoidal again, and the functor
$R^\mon\colon \mathscr{M}_2\to \mathscr{M}_1$ is always lax-monoidal. To iterate the the linear part picture, one needs to have a colax-monoidal structure on $R^\mon$.

The following question is very natural from this point of view:

\begin{equation}
\parbox{5,5in}{
What compatibily the lax-monoidal and the colax-monoidal structures on $R$ should have, in order to induce naturally a colax-monoidal structure on $R^\mon$, which, together with its natural lax-monoidal structure, again obeys this compatibility?
}
\end{equation}

When this question is formulated, it is not hard to find the answer, expressed in the {\it bialgebra axiom} (see Section \ref{bialg}). This axiom is some compatibility on the pair $(c_F,\ell_F)$ of colax-monoidal and lax-monoidal structures between symmetric monoidal categories.

As well, it is not hard to show that this axiom has nice functorial behavior. This is done in Section 1.

Concerning the terminology, recall that in algebra {\it a (co)associative bialgebra} over a field $k$ is a $k$-vector space $A$, endowed with two structures: of a product $m\colon A\otimes A\to A$, and of a coproduct $\Delta\colon A\to A^{\otimes 2}$, subject to following 3 axioms:
\begin{itemize}
\item[(i)] the associativity of the product $m$,
\item[(ii)] the coassociativity of the coproduct $\Delta$,
\item[(iii)] the compatibility: $\Delta(a*b)=\Delta(a)*\Delta(b)$, with $a*b=m(a\otimes b)$.
\end{itemize}

When the associativity and the coassociativity are at the origin of the lax- and colax-monoidality, our bialgebra axiom is an analog of the compatibility axiom.

The compatibility axiom can be drawn as in Figure 1 below.

\sevafigc{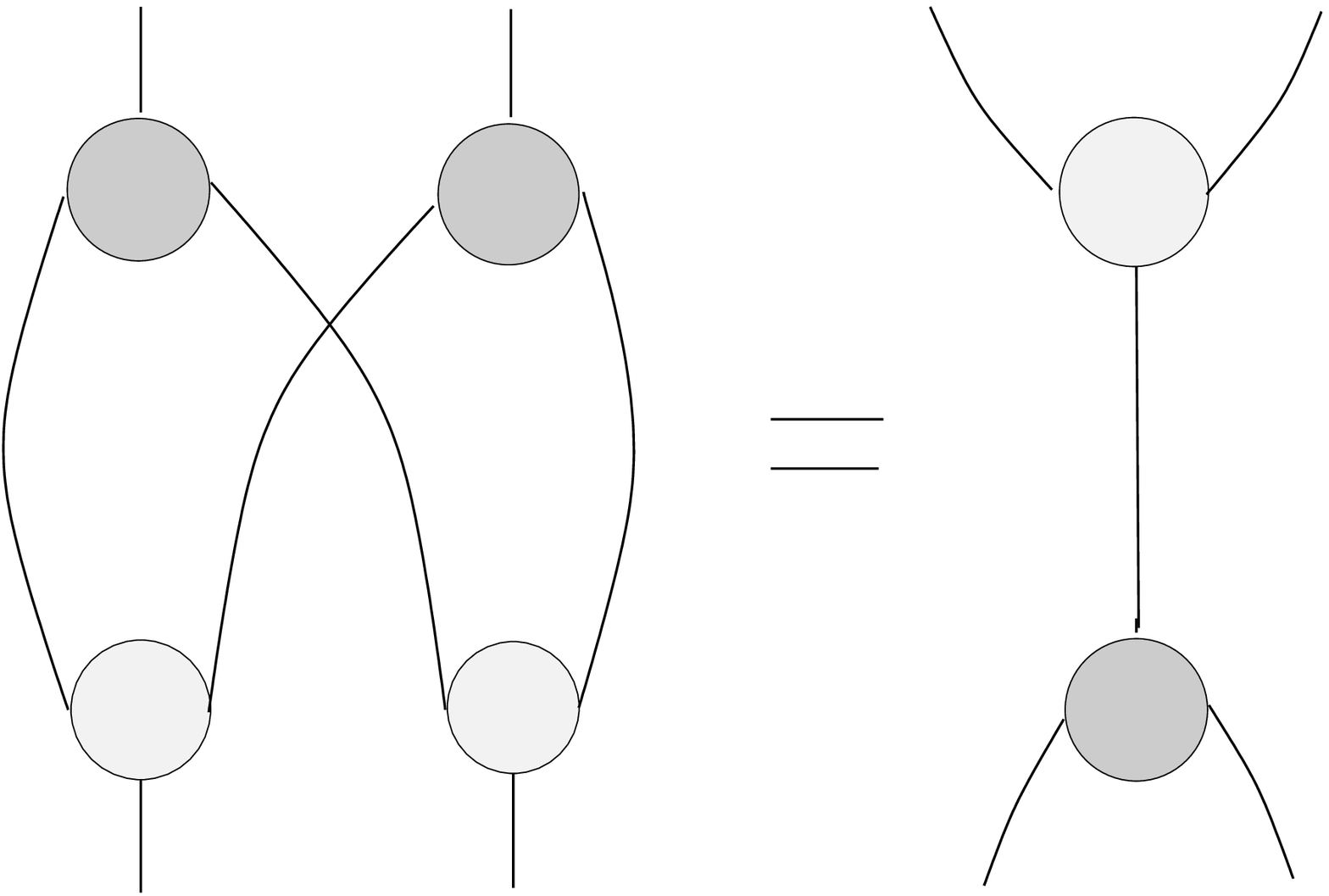}{40mm}{0}{The compatibility axiom in (co)associative bialgebra}

\subsection{\sc}
What is rather amazing, that this axiom holds for the pair of the Alexander-Whitney colax structure and the Eilenberg-MacLane shuffle lax structure on the functor $N$ of normalized chain complex, in the Dold-Kan correspondence. Even more unexpectedly, the bialgebra axiom holds as well on the chain complex level, not only mod out the degenerate simplices. Let us explain why we call this ``amazing''.

Some very simple case, a test case when the bialgebra axiom holds, is the following:

 Let $F\colon \mathscr{M}_1\to\mathscr{M}_2$ be a functor between two (strict) symmetric monoidal categories, and $c_F\colon F(X\otimes Y)\to F(X)\otimes F(Y)$, $\ell_F\colon F(X)\otimes F(Y)\to F(X\otimes Y)$ are (co)lax-monoidal structures, such that
\begin{itemize}
\item[(i)] the both $c_F$ and $\ell_F$ are symmetric,
\item[(ii)] the both compositions
$$
F(X\otimes Y)\xrightarrow{c_F}F(X)\otimes F(Y)\xrightarrow{\ell_F}F(X\otimes Y)
$$
and
$$
F(X)\otimes F(Y) \xrightarrow{\ell_F} F(X\otimes Y)\xrightarrow{c_F}F(X)\otimes F(Y)
$$
are identity maps.
\end{itemize}
Then the bialgebra axiom holds by trivial reasons, see Lemma 1.3.

Suppose now that $\mathscr{M}_1=\mathscr{M}od(\mathbb{Z})^\Delta$, $\mathscr{M}_2=\mathscr{C}(\mathbb{Z})^-$, $F=N$ is the normalized chain complex functor, $c_F$ is the Alexander-Whitney map, and $\ell_F$ is the Eilenberg-MacLane map.

Then each of the four conditions above is known to hold only up-to-homotopy, see Proposition \ref{before}. Thus, from general reasons, one can only expect that the bialgebra axiom holds only up-to-homotopy, which would make more difficult to track its functorial behavior.

Due to our Main Theorem \ref{main}, proven in Section 2.2, the bialgebra axiom in this case holds strictly.
That is, this axiom, being a finer statement, makes a reasonable compromise between the rigidity and the reality.

\subsection{\sc Simplicial sets}
For convenience of the reader, and to fix the notations, recall here the definition of a simplicial set.

The category $\Delta$ has objects $[0],[1],[2],\dots$, and a morphism $f:[m]\to[n]$ is a map of sets $f\colon \{0<1<2\dots <m\}\to \{0<1<2\dots <n\}$
such that $f(i)\le f(j)$ whenever $i\le j$.
A {\it simplicial set} is a functor $X\colon\Delta^\opp\to\Sets$. More directly, it is a collection of sets $\{X_n\}$ with operators
$$
d_i\colon X_n\to X_{n-1},\ \ i=0,\dots, n
$$
and
$$
s_i\colon X_n\to X_{n+1},\ \ i=0,\dots,n
$$
obeying the {\it simplicial identities}:

\begin{equation}\label{simplicial}
\begin{aligned}
\ d_id_j&=d_{j-1}d_i,\ &i<j,\\
s_is_j&=s_{j+1}s_i,\ &i\le j,\\
d_is_j&=s_{j-1}d_i,\ &i<j,\\
\ &=1,\  &i=j,\  i=j+1,\\
\ &=s_jd_{i-1},\ &i>j+1
\end{aligned}
\end{equation}

The operator $d_i\colon X_n\to X_{n-1}$ is corresponded to the strict embedding map $\epsilon^i\colon [n-1]\to [n]$ whose image does not contain $i$, and the operaror $s_i\colon X_n\to X_{n+1}$ is corresponded to the (non-strictly) monotonous map $\eta^i\colon [n+1]\to [n]$ such that the only point in $[n]$ with double pre-image is $i$. One has $d_i=X(\epsilon^i)$ and $s_i=X(\eta^i)$.

A simplex $\sigma$ in a simplicial set is called {\it degenerate} if it has a form $\sigma=s_i\sigma^\prime$ for some $i$.

\begin{prop}
Any morphism $\mu\colon [q]\to[p]$ in $\Delta$ can be uniquely written in form
\begin{equation}
\mu=\epsilon^{i_s}\dots \epsilon^{i_1}\eta^{j_1}\dots\eta^{j_t}
\end{equation}
with $0\le i_1<\dots <i_s\le p$, $0\le j_1<\dots<j_t<q$, and $q-t+s=p$.
\end{prop}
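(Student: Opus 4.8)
The plan is to realize $\mu$ as a monotone map of finite ordinals and to split it by its (epi, mono)-factorization. The point is that a composite of codegeneracies $\eta^{j}$ is surjective and a composite of cofaces $\epsilon^{i}$ is injective, so the asserted normal form is exactly a factorization of $\mu$ as a surjection followed by an injection. First I would set $r+1=|\operatorname{im}\mu|$ and write $\mu=\iota\circ\pi$, where $\pi\colon[q]\to[r]$ is the surjection onto the image carrying the inherited order, and $\iota\colon[r]\to[p]$ is the inclusion of the image. Existence and uniqueness of this intermediate factorization is the standard fact that a monotone map of nonempty finite ordinals factors uniquely as an epimorphism followed by a monomorphism.

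For the injective factor, let $\{i_1<\dots<i_s\}$ with $s=p-r$ be the complement $[p]\setminus\operatorname{im}\mu$. I would prove by induction on $s$ that $\epsilon^{i_s}\cdots\epsilon^{i_1}$ is precisely the monotone injection whose image omits $\{i_1,\dots,i_s\}$. Indeed, if $\epsilon^{i_{k-1}}\cdots\epsilon^{i_1}$ has image-complement $\{i_1,\dots,i_{k-1}\}$, then applying $\epsilon^{i_k}$, which fixes every index below $i_k$ and omits $i_k$, produces image-complement $\{i_1,\dots,i_{k-1},i_k\}$, using $i_1,\dots,i_{k-1}<i_k$. Since a monotone injection is determined by its image, this identifies $\iota$ with $\epsilon^{i_s}\cdots\epsilon^{i_1}$.

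For the surjective factor, let $\{j_1<\dots<j_t\}$ with $t=q-r$ be the set of indices $j$ satisfying $\mu(j)=\mu(j+1)$. Again by induction I would show that $\eta^{j_1}\cdots\eta^{j_t}$ is the monotone surjection collapsing exactly the pairs $(j_k,j_k+1)$: collapsing the rightmost pair first and using $j_1<\dots<j_t$, each further $\eta^{j_k}$ identifies the pair at $j_k$ without shifting the lower collapse-positions, so the composite realizes $\pi$. The numerical constraint $q-t+s=p$ is then automatic from $q=r+t$ and $p=r+s$, and $j_t<q$ holds since $j_t$ and $j_t+1$ both lie in $[q]$. Composing the two factorizations gives the existence of the normal form.

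Uniqueness I would obtain by reading the data off $\mu$ directly rather than by manipulating the simplicial identities. In any expression of the required shape, $\eta^{j_1}\cdots\eta^{j_t}$ is surjective and $\epsilon^{i_s}\cdots\epsilon^{i_1}$ is injective, so uniqueness of the epi-mono factorization forces them to equal $\pi$ and $\iota$; then $\{i_1<\dots<i_s\}$ must be $[p]\setminus\operatorname{im}\mu$ and $\{j_1<\dots<j_t\}$ must be the collapse-set, each listed in increasing order, so both words are uniquely determined. The main obstacle is purely bookkeeping: pinning down the precise index conventions and checking that the increasing orderings $i_1<\dots<i_s$ and $j_1<\dots<j_t$ are exactly what makes each word reduced and makes its indices recoverable from $\mu$. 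Routing the argument through the image and the collapse-set, as above, avoids the more delicate alternative of repeatedly invoking the cosimplicial and simplicial relations to reduce an arbitrary word.
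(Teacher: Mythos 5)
Your proof is correct. The paper states this Proposition without proof, as a recollection of a standard fact about the category $\Delta$ (it is the canonical epi--mono factorization, treated e.g.\ in [ML2], [GZ], or [W, \S 8.1]), so there is no in-paper argument to compare against; your route --- factoring $\mu$ through its image, identifying the monic part with $\epsilon^{i_s}\cdots\epsilon^{i_1}$ via the image-complement $\{i_1<\dots<i_s\}$ and the epic part with $\eta^{j_1}\cdots\eta^{j_t}$ via the collapse-set $\{j\,:\,\mu(j)=\mu(j+1)\}$, and deducing uniqueness from the uniqueness of the epi--mono factorization --- is exactly the standard argument and all steps, including the index bookkeeping and the count $q-t+s=p$, check out.
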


\subsection{\sc Organization of the paper}
Our general intention was to collect all ``big'' commutative diagrams together at the end of the paper, in Section 3 called ``Diagrams''.
We refer each time when we need some ``big'' diagram, to the corresponding part of this Section. In particular, the bialgebra axiom is defined in Section \ref{bialg}.

In Section 1 we study the functorial properties of the bialgebra axiom. Those are for adjoint functor, in Section \ref{section12}, and to the categories of monoids, in Section \ref{section13}.

In Section 2 we introduce the Dold-Kan correspondence, and prove the bialgebra axiom for the pair of the Alexander-Whitney colax structure and the Eilenberg-MacLane shuffle lax structure on the normalized chain complex functor. This is the most unexpected point of our story, from general considerations, one could only conclude that the bialgebra axiom holds up to a homotopy.

The most applications are left beyond this paper (which can be regarded as the first paper of overall project).
The theory developed here will be applied to a construction of colax-monoidal cofibrant resolutions in our next paper.
These resolutions are very useful for working with Leinster's definition [Le] of weak Segal monoids in the $k$-linear (or, more generally, non-cartesian-monoidal) setting. The latter is necessary, for instance, to extend the Kock-To\"{e}n approach [KT] to the Deligne conjecture from simplicial to $k$-linear context.

\subsection*{\sc}
\subsubsection*{\sc Acknowledgements}
I am grateful to Stefan Schwede for valuable discussions on the subject of the paper.
I am indebted to Martin Schlichenmaier for his kindness and support during my 5-year appointment at the University of Luxembourg, which made possible my further development as a mathematician.
The work was done during research stay at the Max-Planck Institut f\"{u}r Mathematik, Bonn. I am thankful to the MPIM for hospitality, financial support, and for very creative working atmosphere.

\section{\sc The bialgebra axiom and its functorial properties}
\subsection{\sc Generalities}
Let $\mathscr{C}$ and $\mathscr{D}$ be two strict monoidal categories, and
let $F\colon \mathscr{C}\to\mathscr{D}$ be a functor with two properties:
\begin{itemize}
\item[1)] $F$ is an equivalence of the underlying categories,
\item[2)] $F$ is monoidal.
\end{itemize}
Then one can choose a quasi-inverse to $F$ functor $G_1\colon \mathscr{D}\to \mathscr{C}$ such that $(F,G_1)$ is an adjoint equivalence, and we can choose another quasi-inverse to $F$ functor $G_2\colon \mathscr{D}\to\mathscr{C}$ which is monoidal (see ???).
In the same time, one may not choose a quasi-inverse $G$ enjoying the both properties simultaneously. This is how lax-monoidal and colax-monoidal functors appear. In this Section we consider some relations between ``adjoitness'' and ``monoidality'', the most essential among which is the {\it bialgebra axiom}, which seemingly is new.

For further reference, introduce some notations (and recall some very basic definitions and facts) related with adjoint functors.

Let $L\colon\mathscr{A}\rightleftarrows\mathscr{B}\colon R$ be two functors. They are called adjoint to each other, with $L$ the left adjoint and $R$ the right adjoint, when
\begin{equation}\label{adj0}
\Mor_\mathscr{B}(LX,Y)\simeq \Mor_\mathscr{A}(X,RY)
\end{equation}
where ``$\simeq$'' here means ``isomorphic as bifunctors $\mathscr{A}^\opp\times \mathscr{B}\to\Sets$''.

This gives rise to maps of functors $\epsilon\colon LR\to\Id_\mathscr{B}$ and $\eta\colon \Id_{\mathscr{A}}\to RL$ such that the compositions
\begin{equation}\label{adj}
\begin{aligned}
\ &L\xrightarrow{L\circ \eta}LRL\xrightarrow{\epsilon\circ L}L\\
&R\xrightarrow{\eta\circ R}RLR\xrightarrow{R\circ \epsilon}R
\end{aligned}
\end{equation}
are identity maps of the functors.

The inverse is true: given maps of functors $\epsilon\colon LR\to\Id_\mathscr{B}$ and $\eta\colon \Id_{\mathscr{A}}\to RL$, obeying \eqref{adj}, gives rise to the isomorphism of bifunctors, that is, to adjoint equivalence (see [ML], Section IV.1, Theorems 1 and 2).

In particular, the case of {\it adjoint equivalence} is the case when $\epsilon\colon LR\to \Id_\mathscr{B}$ and $\eta\colon\Id_\mathscr{A}\to RL$ are {\it isomorphisms of functors}. In this case, setting $\epsilon_1=\eta^{-1}$ and $\eta_1=\epsilon^{-1}$, we obtain another adjunction, with $L$ the {\it right} adjoint and $R$ the {\it left adjoint}.

Let $\phi\in \Mor_\mathscr{B}(LX,Y)$. The following explicit formula for its adjoint $\psi\in \Mor_\mathscr{A}(X,RY)$ will be useful:
\begin{equation}\label{adj2}
X\xrightarrow{\eta}RLX\xrightarrow{R(\phi)}RY
\end{equation}
and analogously for the way back:
\begin{equation}\label{adj3}
LX\xrightarrow{L(\psi)}LRY\xrightarrow{\epsilon}Y
\end{equation}
(see [ML], Section IV.1).

\subsection{\sc (Co)lax-monoidal structures and adjoint functors}\label{section12}
The game starts up with the following simple lemma, due to [SchS03].
\begin{lemma}\label{lemma11}
Let $\mathscr{C}$ and $\mathscr{D}$ be two strict monoidal categories, and
let $F\colon\mathscr{C}\rightleftarrows \mathscr{D}\colon G$ be a pair of adjoint functors, with $F$ the left adjoint. Then any colax-monoidal structure on $F$ induces a lax-monoidal structure on $G$, and vise versa. These two assignments are inverse to each other. {\rm (See Sections \ref{colax},\ref{lax} for (co)lax-monoidal functors).}
\end{lemma}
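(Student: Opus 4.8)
The plan is to transport the structures across the adjunction by passing to adjoints (mates), and then to check that the coherence conditions are interchanged. Write $\eta\colon\Id_{\mathscr{C}}\to GF$ and $\epsilon\colon FG\to\Id_{\mathscr{D}}$ for the unit and counit, subject to the two triangle identities recalled above. First I would produce both assignments explicitly. Given a colax structure $c_F\colon F(X\otimes Y)\to F(X)\otimes F(Y)$, I define a lax structure on $G$ to be the adjoint of the composite $(\epsilon_A\otimes\epsilon_B)\circ c_F\colon F(GA\otimes GB)\to A\otimes B$; by the explicit formula for adjoints this is
$$\ell_G\colon GA\otimes GB\xrightarrow{\ \eta\ }GF(GA\otimes GB)\xrightarrow{G(c_F)}G(FGA\otimes FGB)\xrightarrow{G(\epsilon_A\otimes\epsilon_B)}G(A\otimes B).$$
Conversely, given a lax structure $\ell_G\colon GA\otimes GB\to G(A\otimes B)$, I define a colax structure on $F$ to be the adjoint of $\ell_G\circ(\eta_X\otimes\eta_Y)\colon X\otimes Y\to G(FX\otimes FY)$, i.e.
$$c_F\colon F(X\otimes Y)\xrightarrow{F(\eta_X\otimes\eta_Y)}F(GFX\otimes GFY)\xrightarrow{F(\ell_G)}FG(FX\otimes FY)\xrightarrow{\ \epsilon\ }FX\otimes FY.$$
Naturality of each assignment is immediate from the naturality of $\eta$, $\epsilon$ and of $c_F$, $\ell_G$.

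Second, I would verify that the coherence axioms are exchanged. For $\ell_G$ to be lax-monoidal it must satisfy the associativity constraint (a commuting square, since the categories are strict) and the unit axioms; these should follow from the coassociativity and counit axioms of $c_F$. The mechanism is a diagram chase: computing the adjoint of each of the two legs of the associativity square for $\ell_G$ and using the explicit formula, one reduces---via naturality of $\eta,\epsilon$ and one triangle identity---to the two legs of the coassociativity square for $c_F$, each post-composed (whiskered) with $\epsilon_A\otimes\epsilon_B\otimes\epsilon_C$. Since adjunction transpose is a bijection, the two legs of one square agree iff the two legs of the other do, so associativity of $\ell_G$ is equivalent to coassociativity of $c_F$. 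The unit axioms match by the same device, using the unit components of the (co)lax structures together with the compatibility of $\eta,\epsilon$ with the monoidal units of $\mathscr{C}$ and $\mathscr{D}$.

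Third, I would check that the two assignments are mutually inverse. Starting from $c_F$, forming $\ell_G$ as above and then forming the colax structure associated to that $\ell_G$, the resulting composite collapses back to $c_F$ after inserting the definitions, sliding $c_F$ past the inserted units by naturality, and applying both triangle identities at the appropriate spots; the reverse round-trip is entirely symmetric. Conceptually this is just the statement that taking mates under an adjunction is an involution, so no input beyond the triangle identities is needed here.

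The main obstacle I anticipate is the coherence step, and within it specifically the equivalence of coassociativity of $c_F$ with associativity of $\ell_G$. The difficulty is purely organizational rather than conceptual: one must commute three copies of the counit past a doubly-applied $c_F$ and invoke a triangle identity exactly once, at the correct stage; each individual naturality square is trivial, but they must be assembled in the right order for the reduction to the coassociativity square to go through. Everything else---naturality of the two assignments and the inverse property---follows formally from the adjunction and its triangle identities.
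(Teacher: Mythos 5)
Your proposal is correct and follows essentially the same route as the paper: both define the two assignments as adjoint transposes (mates) of $(\epsilon\otimes\epsilon)\circ c_F$ and of $\ell_G\circ(\eta\otimes\eta)$, and both reduce the ``mutually inverse'' claim to the explicit adjunction formulas together with the triangle identities. You are in fact somewhat more complete than the paper, which records only the two defining composites and leaves the verification of the (co)associativity and unit coherence axioms entirely implicit.
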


\begin{proof}
Denote by $c_F$ a colax-monoidal structure on $F$, and by $\ell_G$ a lax-monoidal structure on $G$. The assignment $c_F\rightsquigarrow \ell_G$ is defined as the adjoint to
\begin{equation}\label{colax2lax}
F(GX\otimes GY)\xrightarrow{c_F}FGX\otimes FGY\xrightarrow{\epsilon\otimes \epsilon}X\otimes Y
\end{equation}
The back assignment $\ell_G\rightsquigarrow c_F$ is defined as the adjoint to
\begin{equation}\label{lax2colax}
X\otimes Y\xrightarrow{\eta\otimes\eta}GFX\otimes GFY\xrightarrow{\ell_G}G(FX\otimes FY)
\end{equation}
We use the explicit formulas for the adjoint functors \eqref{adj2},\eqref{adj3} followed by \eqref{adj} to prove that these two assignments are inverse to each other.
\end{proof}

\begin{lemma}\label{lemma12}
Let $\mathscr{C}$ and $\mathscr{D}$ be two strict monoidal categories, and let $F\colon \mathscr{C}\rightleftarrows \mathscr{D}\colon G$ be an equivalence of the underlying categories. Then given a pair $(c_F,\ell_F)$ where $c_F$ is a colax-monoidal structure on $F$, $\ell_F$ is a lax-monoidal structure on $F$, one can assign to it a pair $(c_G,\ell_G)$ of analogous structures on $G$. If $\mathscr{C}$ and $\mathscr{D}$ are symmetric monoidal, and if the pair $(c_F,\ell_F)$ satisfies the  bialgebra axiom {\rm (see Section \ref{bialg})}, the pair $(c_G,\ell_G)$ satisfies the  bialgebra axiom as well.
\end{lemma}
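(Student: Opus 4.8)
The plan is to build $(c_G,\ell_G)$ by invoking Lemma~\ref{lemma11} twice, once for each of the two adjunctions supplied by the equivalence, and then to verify the bialgebra axiom for $G$ by transporting its defining diagram across the adjunction back to the already-assumed bialgebra axiom for $F$.

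First I would construct the pair. Since $(F,G)$ is an equivalence, the unit $\eta$ and counit $\epsilon$ are isomorphisms, so inverting them (taking the new counit to be $\eta^{-1}$ and the new unit $\epsilon^{-1}$, as in Section~1.1) produces a second adjunction in which $G$ is the left and $F$ the right adjoint. Applying Lemma~\ref{lemma11} to the original adjunction $F\dashv G$ sends the colax structure $c_F$ to a lax structure $\ell_G$; applying it to the reversed adjunction $G\dashv F$ sends the lax structure $\ell_F$ (a lax structure on the right adjoint there) to a colax structure $c_G$ (a colax structure on the left adjoint there). This yields the desired pair, with explicit descriptions coming from the adjunction formulas \ref{adj2} and \ref{adj3}: for instance $\ell_G$ is the adjoint of $(\epsilon\otimes\epsilon)\circ c_F$, so that $\epsilon\circ F(\ell_G)=(\epsilon\otimes\epsilon)\circ c_F$, and dually $F(c_G)\circ\epsilon^{-1}=\ell_F\circ(\epsilon^{-1}\otimes\epsilon^{-1})$.

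To verify the axiom, recall that the bialgebra axiom (Section~\ref{bialg}) is a single commutative diagram built from the functor's colax and lax maps, the symmetry constraints, and functoriality; because $\mathscr{C}$ and $\mathscr{D}$ are \emph{strict} monoidal, no associators enter, so the only structural isomorphism appearing is the symmetry. The key observation is that $F$, being an equivalence, is fully faithful and therefore both preserves and reflects commutativity of diagrams. I would thus apply $F$ to the bialgebra diagram $D_G$ for $(c_G,\ell_G)$ and prove that its image commutes. Using the two clean relations above, together with the triangle identities \ref{adj} to cancel the composites $\epsilon\circ\epsilon^{-1}$ and $\epsilon\circ F\eta$ that arise, every occurrence of $F(\ell_G)$ and $F(c_G)$ can be rewritten purely in terms of $c_F$, $\ell_F$, and the invertible $\epsilon$'s. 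After this rewriting, and after sliding the $\epsilon$'s through the functorial arrows by naturality of $\eta$ and $\epsilon$, the diagram $F(D_G)$ becomes exactly the bialgebra diagram $D_F$ for $(c_F,\ell_F)$ conjugated by the isomorphisms $\eta,\epsilon$; since $D_F$ commutes by hypothesis and conjugation by isomorphisms preserves commutativity, $F(D_G)$ commutes, whence $D_G$ commutes by faithfulness.

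The main obstacle will be the symmetry bookkeeping in the interior of the diagram. The bialgebra hexagon contains a swap of two middle tensor factors, and one must check that under the transport along $\eta,\epsilon$ the braiding of $\mathscr{D}$ appearing in $D_G$ is carried precisely onto the braiding of $\mathscr{C}$ appearing in $D_F$; this is where naturality of the symmetry isomorphism, combined with the naturality of $\epsilon$ and $\eta$ with respect to the swap, has to be applied carefully. Once those symmetry squares are matched, the remaining reductions are routine applications of the triangle identities and functoriality, and no input beyond Lemma~\ref{lemma11} is needed.
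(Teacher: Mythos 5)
Your proposal is correct and follows essentially the same route as the paper: the pair you obtain by applying Lemma \ref{lemma11} to the adjunction $F\dashv G$ and to its reversal is exactly the paper's explicit formulas \eqref{lg} and \eqref{cg}, and the verification by transporting the bialgebra diagram and cancelling the (iso)units and counits via naturality and the triangle identities is the same cancellation argument the paper invokes. If anything, you are more explicit than the paper about the bookkeeping (applying $F$ and using full faithfulness, matching the symmetry squares), which the paper compresses into the phrase ``due to cancelations of $\epsilon$ with $\epsilon^{-1}$ and of $\eta$ with $\eta^{-1}$''.
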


\begin{proof}
Suppose $(c_F,\ell_F)$ are done. We firstly write down the formulas for $\ell_G$ and $c_G$.
\\[5pt]
Formula for $\ell_G$:
\begin{equation}\label{lg}
GX\otimes GY\xrightarrow{\eta}GF(GX\otimes GY)\xrightarrow{c_F}G(FGX\otimes FGY)\xrightarrow{\epsilon\otimes\epsilon}G(X\otimes Y)
\end{equation}
Formula for $c_G$:
\begin{equation}\label{cg}
G(X\otimes Y)\xrightarrow{\epsilon^{-1}\otimes\epsilon^{-1}}G(FGX\otimes FGY)\xrightarrow{\ell_F}GF(GX\otimes GY)\xrightarrow{\eta^{-1}}GX\otimes GY
\end{equation}
When we now write down the bialgebra axiom diagram (see Section \ref{bialg}) for $(c_G,\ell_G)$ we see due to cancelations of $\epsilon$ with $\epsilon^{-1}$ and of $\eta$ with $\eta^{-1}$, that the diagram is commutative as soon as the diagram for $(c_F,\ell_F)$ is.
\end{proof}

\begin{lemma}\label{lemma13}
Let $\mathscr{C}$ and $\mathscr{D}$ be two strict symmetric monoidal categories, and let $F\colon \mathscr{C}\to\mathscr{D}$ admits a colax-monoidal structure $c_F$ and a lax-monoidal structure $\ell_F$ such that:
\begin{itemize}
\item[i.] the both compositions
$$
F(X\otimes Y)\xrightarrow{c_F}F(X)\otimes F(Y)\xrightarrow{\ell_F}F(X\otimes Y)
$$
and
$$
F(X)\otimes F(Y)\xrightarrow{\ell_F}F(X\otimes Y)\xrightarrow{c_F}F(X)\otimes F(Y)
$$
are equal to the identity maps, for any $X,Y\in\Ob\mathscr{C}$,
\item[ii.] the both maps $c_F$ and $\ell_F$ are symmetric.
\end{itemize}
Then the pair $(c_F,\ell_F)$  satisfies the bialgebra axiom {\rm (see Section \ref{bialg})}.
\end{lemma}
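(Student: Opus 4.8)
The plan is to use hypothesis (i) to its fullest: the two identities there say precisely that $c_F$ and $\ell_F$ are mutually inverse isomorphisms, so that $F$ is in fact a \emph{strong} symmetric monoidal functor and the bialgebra axiom of Section \ref{bialg} is merely one of its coherence identities. Spelled out, for $X_1,X_2,Y_1,Y_2\in\Ob\mathscr{C}$ that axiom asserts the equality of two composites from $F(X_1\otimes X_2)\otimes F(Y_1\otimes Y_2)$ to $F(X_1\otimes Y_1)\otimes F(X_2\otimes Y_2)$: the ``multiply, reorder, comultiply'' composite $c_F\circ F(\sigma)\circ\ell_F$ and the ``comultiply, reorder, multiply'' composite $(\ell_F\otimes\ell_F)\circ\tau\circ(c_F\otimes c_F)$, where $\sigma$ and $\tau$ are the symmetry isomorphisms of $\mathscr{C}$ and $\mathscr{D}$ transposing the two middle tensor factors. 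I would prove that these agree by routing both through the canonical fourfold lax map, which I denote $\ell_F^{(4)}[A,B,C,D]\colon FA\otimes FB\otimes FC\otimes FD\to F(A\otimes B\otimes C\otimes D)$; by associativity of $\ell_F$ this map is well defined, independently of the parenthesization.

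The argument then has three steps. First, grouping the four factors as $(X_1\otimes X_2)$ and $(Y_1\otimes Y_2)$ and using the identity $\ell_F\circ c_F=\id$ from (i), one factors the outer lax map as
\[
\ell_{F}\;=\;\ell_F^{(4)}[X_1,X_2,Y_1,Y_2]\circ(c_F\otimes c_F),
\]
so the first composite becomes $c_F\circ F(\sigma)\circ\ell_F^{(4)}[X_1,X_2,Y_1,Y_2]\circ(c_F\otimes c_F)$. Second, one commutes $F(\sigma)$ past the fourfold map by the \emph{equivariance} identity
\[
F(\sigma)\circ\ell_F^{(4)}[X_1,X_2,Y_1,Y_2]\;=\;\ell_F^{(4)}[X_1,Y_1,X_2,Y_2]\circ\tau,
\]
which reorders the arguments to $(X_1\otimes Y_1)$, $(X_2\otimes Y_2)$. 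Third, regrouping this fourfold map accordingly and applying the other identity $c_F\circ\ell_F=\id$ from (i) collapses the remaining colax map,
\[
c_F\circ\ell_F^{(4)}[X_1,Y_1,X_2,Y_2]\;=\;\ell_F\otimes\ell_F.
\]
Chaining the three steps turns the first composite into $(\ell_F\otimes\ell_F)\circ\tau\circ(c_F\otimes c_F)$, which is exactly the second, proving the axiom.

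The only step that is not pure bookkeeping — and hence the main obstacle — is the equivariance identity of the second step, for the transposition of the two \emph{middle} factors. I would localize it: writing $\ell_F^{(4)}=\ell_F^{(3)}\circ(\id\otimes\ell_F\otimes\id)$ with the two middle objects grouped, one pushes $F(\sigma)$ across the ternary map $\ell_F^{(3)}$ by its naturality in the middle slot, and then applies the symmetry hypothesis (ii) to the single binary $\ell_F$ acting on that middle block; this is the one place where (ii) is indispensable. Note that both identities of (i) are genuinely used — one to build the outer lax map out of $\ell_F^{(4)}$ and one to cancel the final colax map — while the strictness of the associativity and unit constraints makes all the remaining reassociations automatic.
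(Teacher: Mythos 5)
Your argument is correct: under hypothesis (i) the maps $c_F$ and $\ell_F$ are mutually inverse, and your three-step factorization through the fourfold lax map (using coherence for the two regroupings, naturality of $\ell_F$ in the middle slot, and the symmetry of $\ell_F$ from (ii) for the equivariance step) is exactly the verification that the paper dismisses with ``It is clear'' and leaves unwritten. So you have supplied, correctly, the details the paper omits; the only cosmetic remark is that your proof never actually needs the symmetry of $c_F$, which in any case follows from that of $\ell_F$ once (i) makes them inverse to each other.
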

It is clear.
\qed

\begin{coroll}\label{coroll14}
Let $\mathscr{C}$ and $\mathscr{D}$ be two strict symmetric monoidal categories, and let
$F\colon\mathscr{C}\rightleftarrows \mathscr{D}\colon G$ be an equivalence of categories. Suppose the left adjoint functor $F$ is symmetric monoidal. Then the corresponding pair $(c_G,\ell_G)$ of (co)lax-monoidal structures on the right adjoint $G$ obeys the bialgebra axiom. {\rm (The analogous is true when $G$ is symmetric monoidal, for (co)lax-monoidal structures on $F$).}
\end{coroll}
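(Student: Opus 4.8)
The plan is to deduce the corollary formally from Lemmas \ref{lemma13} and \ref{lemma12}, so that no fresh diagram chase is needed. The whole content is to observe that a symmetric monoidal structure on $F$ is exactly the ``trivial'' input to which Lemma \ref{lemma13} applies, and then to transport the conclusion along the adjunction using Lemma \ref{lemma12}.

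First I would unpack the hypothesis that the left adjoint $F$ is symmetric monoidal, reading this in the strong sense: $F$ carries a natural isomorphism $\phi_{X,Y}\colon F(X)\otimes F(Y)\eqto F(X\otimes Y)$ compatible with the associativity and symmetry constraints of $\mathscr{C}$ and $\mathscr{D}$. Such a $\phi$ is in particular a lax-monoidal structure, which I set as $\ell_F\defeq\phi$; its inverse is a colax-monoidal structure $c_F\defeq\phi^{-1}\colon F(X\otimes Y)\to F(X)\otimes F(Y)$. The essential point is that $c_F$ and $\ell_F$ are mutually inverse by construction.

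Next I would verify that the pair $(c_F,\ell_F)$ meets the two hypotheses of Lemma \ref{lemma13}. Condition (i) is immediate from $c_F=\ell_F^{-1}$: both round-trip composites $\ell_F\circ c_F$ and $c_F\circ\ell_F$ are identity maps for all $X,Y\in\Ob\mathscr{C}$. Condition (ii) is precisely the compatibility of $\phi$ with the symmetry constraints, which is what ``\emph{symmetric} monoidal'' means; hence $\ell_F$ is symmetric, and therefore so is its inverse $c_F$. By Lemma \ref{lemma13} the pair $(c_F,\ell_F)$ satisfies the bialgebra axiom of Section \ref{bialg}. I would then invoke Lemma \ref{lemma12}: the pair $(c_G,\ell_G)$ named in the corollary is, by definition, the one produced from $(c_F,\ell_F)$ by the explicit formulas for $\ell_G$ and $c_G$ written down in the proof of Lemma \ref{lemma12}, and that lemma transfers the bialgebra axiom from $(c_F,\ell_F)$ to $(c_G,\ell_G)$ since $\mathscr{C}$ and $\mathscr{D}$ are symmetric monoidal. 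This gives the claim. For the parenthetical variant I would use that an adjoint equivalence can be read with left and right adjoints interchanged (as recalled in the Generalities via $\epsilon_1=\eta^{-1}$ and $\eta_1=\epsilon^{-1}$), so the identical two-step argument applies verbatim with the roles of $F$ and $G$ swapped.

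I do not expect a genuine obstacle here; the only point requiring care is bookkeeping rather than mathematics. I must make sure that the colax/lax pair $(c_F,\ell_F)=(\phi^{-1},\phi)$ extracted from the strong monoidal structure is literally the input fed into Lemma \ref{lemma12}, and that ``symmetric monoidal'' is taken in the strong sense so that $\phi$ is invertible: if $F$ were merely lax, the identity $c_F=\ell_F^{-1}$ would be unavailable and Lemma \ref{lemma13} could not be applied. Once this identification is fixed, the corollary is a purely formal consequence of the two preceding lemmas.
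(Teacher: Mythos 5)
Your proposal is correct and follows essentially the same route as the paper, which proves the corollary by combining Lemmas \ref{lemma11}, \ref{lemma12}, and \ref{lemma13}: the strong monoidal structure $\phi$ on $F$ yields the mutually inverse symmetric pair $(c_F,\ell_F)=(\phi^{-1},\phi)$ satisfying Lemma \ref{lemma13}, and Lemma \ref{lemma12} transports the bialgebra axiom to $(c_G,\ell_G)$. Your explicit care about reading ``symmetric monoidal'' in the strong (invertible) sense is a reasonable and correct clarification of what the paper leaves implicit.
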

Follows directly from Lemmas \ref{lemma11}, \ref{lemma12}, and \ref{lemma13}.

\subsection{\sc Categories of monoids}\label{section13}
Let $\mathscr{C}$ be a symmetric strict monoidal category. Denote by $\Mon\mathscr{C}$ the category of strict associative monoids with unit in $\mathscr{C}$; it is again a symmetric monoidal category.

\begin{lemma}\label{lemma15}
Let $\mathscr{C}$ and $\mathscr{D}$ be strict symmetric monoidal categories, and let $F\colon \mathscr{C}\to \mathscr{D}$ be a functor. Any {\rm symmetric} lax-monoidal structure $\ell_F$ on the functor $F$ defines a functor $F^\mon\colon \Mon\mathscr{C}\to\Mon\mathscr{D}$, together with a lax-monoidal structure $\ell_F^\mon$ on $F^\mon$.
\end{lemma}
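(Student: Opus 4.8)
The plan is to construct $F^\mon$ in the only reasonable way and then to recognize the symmetry hypothesis on $\ell_F$ as precisely what makes its structure map into a morphism of monoids. Write $e_F\colon\mathbf{1}_{\mathscr{D}}\to F(\mathbf{1}_{\mathscr{C}})$ for the unit component and $\ell_F\colon F(X)\otimes F(Y)\to F(X\otimes Y)$ for the binary component of the lax structure, and let $\sigma^{\mathscr{C}}_{X,Y}\colon X\otimes Y\to Y\otimes X$ and $\sigma^{\mathscr{D}}$ denote the symmetries. First I would define $F^\mon$ on objects: for a monoid $(A,m_A,u_A)$ in $\mathscr{C}$, equip $F(A)$ with the multiplication $F(A)\otimes F(A)\xrightarrow{\ell_F}F(A\otimes A)\xrightarrow{F(m_A)}F(A)$ and the unit $\mathbf{1}_{\mathscr{D}}\xrightarrow{e_F}F(\mathbf{1}_{\mathscr{C}})\xrightarrow{F(u_A)}F(A)$. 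Its associativity and unitality follow by a routine diagram chase from the associativity of $m_A$, the unitality of $u_A$, the associativity and unit coherence axioms of $\ell_F$, and the naturality of $\ell_F$; notably this step uses no symmetry. On a morphism $f\colon A\to B$ of monoids I would set $F^\mon(f)=F(f)$, and naturality of $\ell_F$ shows $F(f)$ is again a morphism of monoids, so that $F^\mon\colon\Mon\mathscr{C}\to\Mon\mathscr{D}$ is a well-defined functor.

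Next I would take the lax structure $\ell_F^\mon$ to have the same underlying morphisms as $\ell_F$, i.e. $\ell_F^\mon=\ell_F\colon F^\mon(A)\otimes F^\mon(B)\to F^\mon(A\otimes B)$ and $e_F^\mon=e_F$. The real content of the lemma is that $\ell_F$ is a morphism in $\Mon\mathscr{D}$. The multiplication on the source $F(A)\otimes F(B)$ is that of the tensor of monoids in $\mathscr{D}$, which inserts $\sigma^{\mathscr{D}}_{F(B),F(A)}$ in the middle, whereas the multiplication on the target $F(A\otimes B)$ is induced from the tensor monoid $A\otimes B$ in $\mathscr{C}$, whose multiplication inserts $\sigma^{\mathscr{C}}_{B,A}$. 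Checking that $\ell_F$ intertwines the two multiplications therefore comes down to commuting $\ell_F$ past these symmetries, which is exactly the symmetry hypothesis $F(\sigma^{\mathscr{C}})\circ\ell_F=\ell_F\circ\sigma^{\mathscr{D}}$; combined with the associativity coherence and the naturality of $\ell_F$, it produces the required commuting square. I expect this to be the main obstacle, being the single place where symmetry is indispensable, and it is best recorded as one of the large diagrams collected in Section 3. One checks similarly, from the unit coherence of $\ell_F$, that $e_F$ is a morphism of monoids from the unit object $\mathbf{1}_{\mathscr{D}}$ to $F^\mon(\mathbf{1}_{\mathscr{C}})$.

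Finally I would verify that $\ell_F^\mon$ satisfies the coherence axioms of a lax-monoidal structure on $F^\mon$. Since the forgetful functor $\Mon\mathscr{D}\to\mathscr{D}$ is faithful, a diagram of monoid morphisms commutes as soon as it does after forgetting to $\mathscr{D}$; and under this functor each coherence diagram for $\ell_F^\mon$ becomes the corresponding coherence diagram for $\ell_F$, because the two have identical underlying morphisms. Hence $\ell_F^\mon$ inherits the associativity and unit axioms directly from $\ell_F$, which completes the construction.
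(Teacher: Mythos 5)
Your construction is correct and is exactly the standard one this lemma refers to: the paper itself offers no written proof (the statement is simply followed by \qed), so your argument supplies precisely the details the author treats as routine. In particular you correctly isolate the only nontrivial point, namely that the symmetry of $\ell_F$ is what makes $\ell_F\colon F^\mon(A)\otimes F^\mon(B)\to F^\mon(A\otimes B)$ a morphism of monoids, since both tensor-product multiplications involve the respective symmetries $\sigma^{\mathscr{C}}$ and $\sigma^{\mathscr{D}}$; everything else follows from naturality, the coherence axioms of $\ell_F$, and the faithfulness of the forgetful functor $\Mon\mathscr{D}\to\mathscr{D}$.
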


\qed

\begin{lemma}\label{lemma16}
In the assumptions of  Lemma \ref{lemma15}, suppose
$F$ admits a colax-monoidal structure $c_F$, compatible with $\ell_F$ by the bialgebra axiom {\rm (see Section \ref{bialg})}. Then one can define a colax-monoidal structure $c^\mon_F$ on $F^\mon$, and, moreover, $(c_F^\mon,\ell_F^\mon)$ are compatible by the bialgebra axiom.
\end{lemma}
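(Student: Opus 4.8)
The plan is to define $c_F^{\mon}$ to have the same underlying arrow as $c_F$, and to show that the bialgebra axiom is exactly what promotes $c_F$ to a morphism between monoid objects. Throughout write $U\colon\Mon\mathscr{D}\to\mathscr{D}$ for the forgetful functor, which is faithful and strict symmetric monoidal (and likewise $U\colon\Mon\mathscr{C}\to\mathscr{C}$); the point of the argument is that every structure map in sight has the \emph{same} underlying arrow in $\mathscr{D}$ as the corresponding structure map of $F$, so that commutativity upstairs can be read off from commutativity downstairs.

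\emph{Construction of $c_F^{\mon}$.} Let $A,B\in\Ob\Mon\mathscr{C}$, with products $m_A,m_B$. By Lemma \ref{lemma15} both $F^{\mon}(A\otimes B)=F(A\otimes B)$ and $F^{\mon}(A)\otimes F^{\mon}(B)=F(A)\otimes F(B)$ carry canonical monoid structures, and I claim the underlying arrow $c_F\colon F(A\otimes B)\to F(A)\otimes F(B)$ is a homomorphism of monoids; $c_F^{\mon}$ is then declared to be this homomorphism. To verify the claim I would expand both products: the product on $F(A\otimes B)$ is $F(m_{A\otimes B})\circ\ell_F$, where $m_{A\otimes B}=(m_A\otimes m_B)\circ\tau$ with $\tau=\id\otimes\sigma\otimes\id$ the reshuffle defining the monoid structure on $A\otimes B$, while the product on $F(A)\otimes F(B)$ is $\bigl(F(m_A)\circ\ell_F\bigr)\otimes\bigl(F(m_B)\circ\ell_F\bigr)$ precomposed with the analogous reshuffle of $\mathscr{D}$. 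Applying naturality of $c_F$ to the arrow $m_A\otimes m_B$ strips the common outer factor $F(m_A)\otimes F(m_B)$ from both sides, and the homomorphism identity then collapses to exactly the bialgebra axiom square for $(c_F,\ell_F)$ specialized to $X_1=X_2=A$, $Y_1=Y_2=B$. Compatibility of $c_F$ with the monoid units is checked the same way, from the unit components of the two structures. This reduction to the bialgebra axiom is the crux of the proof, and the one genuinely computational step.

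\emph{Colax axioms for $c_F^{\mon}$.} The coassociativity and counit diagrams for $c_F^{\mon}$ are diagrams in $\Mon\mathscr{D}$; applying the faithful functor $U$ sends them to the corresponding diagrams for $c_F$ in $\mathscr{D}$, which commute because $c_F$ is a colax-monoidal structure. Faithfulness of $U$ then forces the diagrams to commute already in $\Mon\mathscr{D}$, so $c_F^{\mon}$ is a bona fide colax-monoidal structure on $F^{\mon}$.

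\emph{Bialgebra axiom for $(c_F^{\mon},\ell_F^{\mon})$.} Both legs of the bialgebra square for $(c_F^{\mon},\ell_F^{\mon})$ are composites of tensor products of the arrows $c_F^{\mon}$, $\ell_F^{\mon}$ and the symmetry $\sigma$ of $\Mon\mathscr{D}$, hence are morphisms of monoids, and their images under $U$ are precisely the two legs of the bialgebra square for $(c_F,\ell_F)$. That square commutes by hypothesis, so by faithfulness of $U$ the square commutes in $\Mon\mathscr{D}$ as well. I expect no obstacle here beyond the bookkeeping of checking that each arrow is indeed a monoid homomorphism; once the first step identifies $c_F^{\mon}$ with $c_F$ at the level of $U$, the last two steps are a routine transport of commutativity along the faithful strict monoidal forgetful functor.
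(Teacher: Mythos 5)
Your argument is correct in substance, and it is worth noting that the paper itself offers no proof of this lemma at all (it is stated and immediately followed by a q.e.d.), so there is nothing to compare against; what you have written is the argument the author evidently has in mind. Your key step is exactly right: defining $c_F^{\mon}$ to have underlying arrow $c_F$, the multiplicativity identity, after expanding the product on $F^{\mon}(A\otimes B)$ as $F\bigl((m_A\otimes m_B)\circ(\id\otimes\sigma\otimes\id)\bigr)\circ\ell_F$ and using naturality of $c_F$ in the pair of arrows $(m_A\colon A\otimes A\to A,\ m_B\colon B\otimes B\to B)$ to peel off $F(m_A)\otimes F(m_B)$ from both sides, reduces precisely to the bialgebra square \eqref{bialgebra1}${}={}$\eqref{bialgebra2} with $X=Z=A$, $Y=W=B$; and the remaining verifications (coassociativity of $c_F^{\mon}$, the bialgebra axiom for the pair $(c_F^{\mon},\ell_F^{\mon})$) do transport along the faithful strict symmetric monoidal forgetful functor exactly as you say. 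The one place where you are too quick is the unit clause: for $c_F$ to respect the monoid units you need the composite $1_{\mathscr{D}}\xrightarrow{\kappa}F(1_{\mathscr{C}})\xrightarrow{c_F(1,1)}F(1_{\mathscr{C}})\otimes F(1_{\mathscr{C}})$ to equal $\kappa\otimes\kappa$, and this is \emph{not} a consequence of the bialgebra axiom as stated in Section \ref{bialg} (which has no unit components) nor of the separate unit axioms of the lax and colax structures; it is an additional compatibility between $\kappa$ and the colax counit $\alpha$ that must be imposed (it holds trivially in the Dold--Kan example, where $N$ preserves the unit strictly). So flag that hypothesis explicitly rather than asserting the unit check goes ``the same way''; with that amendment your proof is complete.
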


\qed

\section{\sc The Dold-Kan correspondence}
We use the following notations:

$\mathscr{C}(\mathbb{Z})$ is the category of unbounded complexes of abelian groups, $\mathscr{C}(\mathbb{Z})^+$ (resp., $\mathscr{C}(\mathbb{Z})^-$) are the full subcategories of $\mathbb{Z}_{\ge 0}$-graded (resp., $\mathbb{Z}_{\le 0}$-graded) complexes. The category of abelian groups placed in degree 0 (with zero differential) is denoted by $\mathscr{M}od(\mathbb{Z})$, thus, $\mathscr{M}od(\mathbb{Z})=\mathscr{C}(\mathbb{Z})^-\cap \mathscr{C}(\mathbb{Z})^+$.

\subsection{\sc}
The Dold-Kan correspondence is the following theorem:
\begin{theorem}[Dold-Kan correspondence]
There is an adjoint equivalence of categories
$$
N\colon \mathscr{M}od(\mathbb{Z})^\Delta\rightleftarrows \mathscr{C}(\mathbb{Z})^-\colon\Gamma
$$
where $N$ is the functor of normalized chain complex (which is isomorphic to the Moore complex).
\end{theorem}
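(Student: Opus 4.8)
The plan is to construct the inverse functor $\Gamma$ explicitly and to exhibit mutually inverse natural isomorphisms $N\Gamma\cong\Id$ and $\Gamma N\cong\Id$, which, by the generalities on adjoint equivalences recalled at the start of Section 1, can then be promoted to an honest adjoint equivalence. Throughout, the combinatorial backbone is the unique epi--mono factorization in $\Delta$ recorded in the Proposition above (the factorization $\mu=\epsilon^{i_s}\cdots\epsilon^{i_1}\eta^{j_1}\cdots\eta^{j_t}$), which is what allows everything to be indexed by surjections.

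First I would fix the two descriptions of $N$ and the isomorphism between them. For a simplicial abelian group $A$ one sets $N_n(A)=\bigcap_{i=0}^{n-1}\Ker(d_i)$, with differential induced by $\pm d_n$; the composite $N_n(A)\hookrightarrow A_n\onto A_n/D_n(A)$, where $D_n(A)=\sum_i s_iA_{n-1}$ is the degenerate part, is an isomorphism, identifying the normalized complex with the Moore quotient. The heart of the argument is the splitting
$$
A_n\;=\;\bigoplus_{\eta\colon[n]\onto[k]}\eta^*\bigl(N_k(A)\bigr),
$$
the sum ranging over all surjections in $\Delta$, where $\eta^*=A(\eta)$. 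I would prove this by induction on $n$: the unique factorization shows that $D_n(A)$ is spanned by the images of the proper degeneracies, and a kernel/image analysis using the simplicial identities \eqref{simplicial} gives $A_n=N_n(A)\oplus D_n(A)$ together with $D_n(A)\cong\bigoplus_{\eta\neq\id}\eta^*(N_k A)$ after feeding the inductive hypothesis for $A_{n-1}$ into $\sum_i s_i A_{n-1}$.

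Next I would define $\Gamma$ on objects by $\Gamma(C)_n=\bigoplus_{\eta\colon[n]\onto[k]}C_k$ and on a morphism $\alpha\colon[m]\to[n]$ componentwise: on the summand indexed by $\eta\colon[n]\onto[k]$, factor the composite $\eta\circ\alpha$ as $[m]\xrightarrow{\eta'}[k']\xrightarrow{\delta}[k]$ with $\eta'$ epi and $\delta$ mono, and send $C_k$ into the $\eta'$-summand of $\Gamma(C)_m$ by the identity when $\delta=\id$, by the differential $\partial\colon C_k\to C_{k-1}$ when $\delta$ is the top coface $\epsilon^{k}$, and by zero otherwise. Checking that this respects composition---equivalently, that $\Gamma(C)$ satisfies the simplicial identities \eqref{simplicial}---is where the real work lies: it is a bookkeeping of epi--mono factorizations whose only nontrivial input is the single relation $\partial^2=0$, which is precisely what reconciles the ``differential'' and ``zero'' cases. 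This is the step I expect to be the main obstacle.

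Finally I would assemble the equivalence. The normalized complex of $\Gamma(C)$ isolates the summand indexed by $\id\colon[n]\to[n]$, whose only surviving face map is the top one, so $N\Gamma(C)\cong C$ with matching differential, giving $N\Gamma\cong\Id$. Conversely, the splitting above is exactly the assertion that the canonical comparison $\Gamma N(A)\to A$ is an isomorphism, giving $\Gamma N\cong\Id$. Having mutually inverse natural isomorphisms, one fixes a unit and a counit and verifies the triangle identities \eqref{adj} by means of the explicit adjunction formulas \eqref{adj2}, \eqref{adj3}, upgrading the equivalence to the asserted adjoint equivalence, with $N$ the left adjoint.
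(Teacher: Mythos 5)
The paper does not prove this theorem itself---it simply refers the reader to [W, Section 8.4] and [SchS03, Section 2]---and your sketch is precisely the standard argument carried out there: the direct-sum decomposition $A_n=\bigoplus_{\eta\colon[n]\twoheadrightarrow[k]}\eta^*(N_kA)$ over surjections, the explicit construction of $\Gamma$ via the epi--mono factorization of the Proposition, and the two natural isomorphisms $N\Gamma\cong\Id$, $\Gamma N\cong\Id$ upgraded to an adjoint equivalence. Your outline is correct, and the two steps you flag as the real work (the splitting lemma and the verification that $\Gamma(C)$ satisfies the simplicial identities) are exactly the points where the cited references spend their effort.
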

We refer to [W], Section 8.4, and [SchS03], Section 2, which both contain excellent treatment of this Theorem.

The both categories $\mathscr{M}od(\mathbb{Z})^\Delta$ and $\mathscr{C}(\mathbb{Z})^-$ are {\it symmetric monoidal} in natural way.
However, neither of functors $N$ and $\Gamma$ is monoidal.

There is a colax-monoidal structure on $N$, called {\it the Alexander-Whitney map} $AW\colon N(A\otimes B)\to NA\otimes NB$ and a lax-monoidal structure on $N$, called {\it the shuffle map} $\nabla\colon N(A)\otimes N(B)\to N(A\otimes B)$. 

Recall the explicit formulas for them.

The Alexander-Whitney colax-monoidal map $AW\colon N(A\otimes B)\to N(A)\otimes N(B)$ is defined as
\begin{equation}
AW(a^k\otimes b^k)=\sum_{i+j=k}d_\fin^ia^k\otimes d_0^jb^k
\end{equation}
where $d_0$ and $d_\fin$ are the first and the latest simplicial face maps. 

The Eilenberg-MacLane shuffle lax-monoidal map $\nabla\colon N(A)\otimes N(B)\to N(A\otimes B)$ is defined as
\begin{equation}
\nabla(a^k\otimes b^\ell)=\sum_{\substack{(k,\ell)\text{-shuffles }(\alpha,\beta)}}(-1)^{(\alpha,\beta)}S_\beta a^k\otimes S_\alpha b^\ell
\end{equation}
where 
$$
S_\alpha=s_{\alpha_k}\dots s_{\alpha_1}
$$
and 
$$
S_\beta=s_{\beta_\ell}\dots s_{\beta_1}
$$
Here $s_i$ are simplicial degeneracy maps, $\alpha=\{\alpha_1<\dots <\alpha_k\}$, $\beta=\{\beta_1<\dots<\beta_\ell\}$, $\alpha,\beta\subset [0,1,\dots, k+\ell-1]$, $\alpha\cap\beta=\varnothing$.

Let us summarize their properties in the following Proposition, see [SchS03], Section 2, and references therein, for a proof.
\begin{prop}\label{before}
The colax-monoidal Alexander-Whitney and the lax-monoidal shuffle structures on the functor $N$ enjoy the following properties:
\begin{itemize}
\item[1.] the composition
$$
NA\otimes NB\xrightarrow{\nabla}N(A\otimes B)\xrightarrow{AW}NA\otimes NB
$$
is the identity,
\item[2.] the composition
$$
N(A\otimes B)\xrightarrow{AW}NA\otimes NB\xrightarrow{\nabla}N(A\otimes B)
$$
is naturally chain homotopic to the identity,
\item[3.] the shuffle map $\nabla$ is symmetric,
\item[4.] the Alexander-Whitney map $AW$ is symmetric up to a natural chain homotopy.
\end{itemize}
\end{prop}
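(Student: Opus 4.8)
The four assertions split into two elementary combinatorial identities, (1) and (3), and two genuinely homotopical statements, (2) and (4); my plan is to dispatch the former by a direct computation with the explicit formulas recalled above, and to obtain the latter from the method of acyclic models (i.e. the Eilenberg--Zilber theorem).

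For (1) I would simply compose the two formulas. Applying $\nabla$ to $a^k\otimes b^\ell$ produces a signed sum of diagonal simplices $S_\beta a^k\otimes S_\alpha b^\ell$ in degree $k+\ell$, and applying $AW$ to each yields
\begin{equation}
\sum_{i+j=k+\ell} d_\fin^i S_\beta a^k\otimes d_0^j S_\alpha b^\ell .
\end{equation}
The point is that $N$ annihilates degenerate simplices, so a summand survives only when both $d_\fin^i S_\beta a^k$ and $d_0^j S_\alpha b^\ell$ are nondegenerate of the correct degrees $k$ and $\ell$; a short bookkeeping with the simplicial identities shows this forces $(i,j)=(\ell,k)$ together with the extreme shuffle $\beta=\{0,\dots,\ell-1\}$, $\alpha=\{\ell,\dots,k+\ell-1\}$, whose sign is $+1$ and which returns $a^k\otimes b^\ell$. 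Hence the composite is the identity. Assertion (3) is likewise a sign computation: the twist on $NA\otimes NB$ carries $a^k\otimes b^\ell$ to $(-1)^{k\ell}\,b^\ell\otimes a^k$, and interchanging the roles of $\alpha$ and $\beta$ sets up a bijection between $(k,\ell)$- and $(\ell,k)$-shuffles under which the shuffle signs differ exactly by the Koszul sign $(-1)^{k\ell}$; this is precisely what is needed for $\nabla$ to intertwine the two symmetries.

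For (2) and (4) I would not attempt an explicit contracting homotopy, but instead invoke acyclic models. Viewing $N(-\otimes-)$ and $N(-)\otimes N(-)$ as functors of a pair of simplicial abelian groups, both are free (representable) and acyclic in positive degrees on the standard models $(\mathbb{Z}[\Delta^m],\mathbb{Z}[\Delta^n])$, while all the chain maps in question---$\nabla\circ AW$ against the identity for (2), and $AW$ against its conjugate by the two twist isomorphisms for (4)---agree in degree $0$. The acyclic models theorem then produces a natural chain homotopy between any two such maps, establishing (2) and (4) at once.

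The main obstacle is (2): it is the only assertion where no on-the-nose identity can hold, since the two composites genuinely differ, so everything rests on producing the natural homotopy. Once the acyclic-models framework is set up---verifying freeness and positive-degree acyclicity on the representable models and agreement in degree $0$---assertion (4) follows by the same mechanism at no extra cost, and (1) and (3) remain independent elementary checks.
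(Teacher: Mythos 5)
The paper itself offers no proof of this proposition, deferring entirely to [SchS03], Section 2 and the references therein; your argument (direct computation for (1) and (3), acyclic models for (2) and (4)) is precisely the classical Eilenberg--Zilber argument found in those sources, so the approach matches. One small correction to your bookkeeping in (1): with the paper's conventions ($S_\beta$ applied to $a^k$, $S_\alpha$ to $b^\ell$, $d_\fin$ hitting the first tensor factor and $d_0$ the second), the unique nondegenerate surviving term is $(i,j)=(\ell,k)$ with the extreme shuffle $\alpha=\{0,\dots,k-1\}$, $\beta=\{k,\dots,k+\ell-1\}$ --- the degeneracies on $a^k$ must sit at the top so that $d_\fin s_{\maxx}=1$ cancels them and those on $b^\ell$ at the bottom so that $d_0 s_0=1$ cancels them --- whereas you have the two extreme shuffles interchanged (with your choice the term $d_\fin^{\ell}S_\beta a^k$ is degenerate and dies in $N$). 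The sign of the correct extreme shuffle is indeed $+1$, and the rest, including the acyclic-models step for (2) and (4), is sound.
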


\begin{coroll}
The pair $(AW,\nabla)$ of (co)lax-monoidal structures on the functor $N$ obeys the bialgebra axiom {\rm (see Section \ref{bialg})} {\rm up to a chain homotopy}.
\end{coroll}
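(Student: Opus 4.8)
The plan is to recognize this corollary as the homotopy-category shadow of Lemma \ref{lemma13}. That lemma asserts that a pair $(c_F,\ell_F)$ satisfies the bialgebra axiom whenever the two round-trip compositions are identities and both structure maps are symmetric. In the Dold--Kan situation Proposition \ref{before} supplies exactly these four hypotheses, but only up to chain homotopy. I would therefore pass to the homotopy category of chain complexes, where ``up to chain homotopy'' is upgraded to strict equality, and then invoke Lemma \ref{lemma13} verbatim.

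Concretely, introduce the homotopy category $\mathscr{K}(\mathbb{Z})^-$ whose objects are non-positively graded complexes of abelian groups and whose morphisms are chain-homotopy classes of chain maps. The tensor product of complexes descends to make $\mathscr{K}(\mathbb{Z})^-$ a (strict) symmetric monoidal category, and the canonical projection $Q\colon \mathscr{C}(\mathbb{Z})^-\to\mathscr{K}(\mathbb{Z})^-$ is symmetric monoidal. Composing, one obtains $\overline{N}=Q\circ N\colon \mathscr{M}od(\mathbb{Z})^\Delta\to\mathscr{K}(\mathbb{Z})^-$, and the maps $AW$ and $\nabla$ induce a colax-monoidal structure $\overline{AW}=Q(AW)$ and a lax-monoidal structure $\overline{\nabla}=Q(\nabla)$ on $\overline{N}$; the required coherence axioms descend from those of $AW$ and $\nabla$ precisely because $Q$ is monoidal. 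Note that every edge of the bialgebra diagram is a morphism of the target ($AW$, $\nabla$, the target symmetry, and $N(\tau)$ for the source symmetry $\tau$), so it suffices to quotient only the target.

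With this in place, Proposition \ref{before} becomes the statement that in $\mathscr{K}(\mathbb{Z})^-$ both compositions $\overline{AW}\circ\overline{\nabla}$ and $\overline{\nabla}\circ\overline{AW}$ equal the identity (by parts 1 and 2), and both $\overline{AW}$ and $\overline{\nabla}$ are symmetric (by parts 3 and 4). Thus the pair $(\overline{AW},\overline{\nabla})$ satisfies hypotheses (i) and (ii) of Lemma \ref{lemma13}, and that lemma yields the bialgebra axiom for $(\overline{AW},\overline{\nabla})$, i.e. the commutativity of the bialgebra diagram in $\mathscr{K}(\mathbb{Z})^-$. By the very definition of morphisms in $\mathscr{K}(\mathbb{Z})^-$, this is exactly the commutativity of the same diagram up to chain homotopy in $\mathscr{C}(\mathbb{Z})^-$, which is the assertion of the corollary.

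The only point requiring care --- and the main obstacle, such as it is --- is the verification that $\mathscr{K}(\mathbb{Z})^-$ is strict symmetric monoidal and that $Q$ transports the symmetry, the colax and lax structures, and their coherences compatibly, so that Lemma \ref{lemma13} genuinely applies. This is routine but should be recorded. A more hands-on alternative, avoiding the homotopy category altogether, is to unwind the (omitted) diagram chase behind Lemma \ref{lemma13} directly in $\mathscr{C}(\mathbb{Z})^-$, replacing each appeal to one of the four conditions by the corresponding chain homotopy of Proposition \ref{before} and splicing these homotopies together; this works as well, at the cost of checking that chain homotopies tensor with identities and compose, but it obscures the conceptual point that the corollary is simply Lemma \ref{lemma13} read in the homotopy category. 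This contrast also makes clear why the strict statement of the Main Theorem \ref{main} is genuinely stronger, and genuinely surprising.
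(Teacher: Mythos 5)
Your argument is correct and is essentially the paper's intended one: the corollary is stated without proof as an immediate consequence of Proposition \ref{before} combined with (the up-to-homotopy reading of) Lemma \ref{lemma13}, which is exactly what your passage to the homotopy category $\mathscr{K}(\mathbb{Z})^-$ makes precise. The points you flag for care (that the tensor product and the (co)lax coherences descend along the monoidal projection $Q$) are routine and do not hide any gap.
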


\subsection{\sc Main Theorem}
Our Main Theorem \ref{main} says the pair $(AW,\nabla)$ in fact obeys the bialgebra axiom on the nose, not only up to a homotopy.

\begin{theorem}\label{main}
The pair $(AW,\nabla)$ on the chain complex functor $C\colon \mathscr{M}od(\mathbb{Z})^\Delta\to C(\mathbb{Z})^-$ obeys the bialgebra axiom. Consequently, the normalized chain complex functor $N$ obeys the bialgebra axiom as well.
\end{theorem}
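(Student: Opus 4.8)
The plan is to unwind the bialgebra axiom of Section~\ref{bialg}, applied to the pair $(AW,\nabla)$ on the unnormalized chain functor $C$, into an explicit equality of two maps
$C(A_1\otimes A_2)\otimes C(B_1\otimes B_2)\to C(A_1\otimes B_1)\otimes C(A_2\otimes B_2)$, and then to evaluate both maps on a general element and match the resulting sums term by term. Write a homogeneous element of the source as $(a_1\otimes a_2)\otimes(b_1\otimes b_2)$ with $a_i\in(A_i)_p$ and $b_i\in(B_i)_q$. One composite applies $\nabla$ first, then the symmetry of $\mathscr{M}od(\mathbb{Z})^\Delta$ taken inside $C$ (which is \emph{sign-free}, being degreewise the flip of ordinary abelian groups), then $AW$; the other applies $AW\otimes AW$, then the symmetry of $\mathscr{C}(\mathbb{Z})^-$ (the Koszul one, carrying a sign), then $\nabla\otimes\nabla$. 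Using that faces and degeneracies act diagonally on tensor products of simplicial abelian groups, both composites become explicit multiple sums of terms of the shape $\bigl(d_\fin^{\,i}S_\beta a_1\otimes d_\fin^{\,i}S_\alpha b_1\bigr)\otimes\bigl(d_0^{\,j}S_\beta a_2\otimes d_0^{\,j}S_\alpha b_2\bigr)$.

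The heart of the matter is then a purely combinatorial identity in $\Delta$. First I would record the two indexing sets: the $\nabla$-first composite is indexed by a $(p,q)$-shuffle $(\alpha,\beta)$ together with an Alexander--Whitney cut $i+j=p+q$, while the $AW$-first composite is indexed by two cuts $i_1+j_1=p$, $i_2+j_2=q$ together with a $(j_1,j_2)$-shuffle and an $(i_1,i_2)$-shuffle. I claim these sets are in natural bijection. Reading a $(p,q)$-shuffle as a monotone lattice path from $(0,0)$ to $(p,q)$, the cut after $j$ steps lands at a lattice point $(j_1,j_2)$ with $j_1+j_2=j$; its initial segment is a $(j_1,j_2)$-shuffle and its terminal segment, re-based at the origin, is an $(i_1,i_2)$-shuffle with $i_1=p-j_1$, $i_2=q-j_2$. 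Concatenating two sub-shuffles along an intermediate point inverts this, so summing over cut point and shuffle on one side is the same as summing over intermediate point and the two sub-shuffles on the other.

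It then remains to check that the simplicial operators attached to matching indices coincide and that the signs agree. Here I would commute the front-face $d_\fin^{\,i}$ and the back-face $d_0^{\,j}$ past the shuffle degeneracies $S_\alpha,S_\beta$ by the simplicial identities, establishing factorizations $d_\fin^{\,i}S_\beta=S_{\beta''}d_\fin^{\,i_1}$ and $d_0^{\,j}S_\beta=S_{\beta'}d_0^{\,j_1}$ (and likewise for $S_\alpha$), in which $S_{\beta'},S_{\beta''}$ are exactly the degeneracies of the two sub-shuffles produced above and the surviving faces $d_\fin^{\,i_1},d_0^{\,j_1}$ are precisely the Alexander--Whitney cuts of $a_1,a_2$ at $p$ and of $b_1,b_2$ at $q$. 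For the signs, the shuffle sign $(-1)^{(\alpha,\beta)}$ must factor as the product of the two sub-shuffle signs times the Koszul sign $(-1)^{i_1 j_2}$ introduced by the symmetry of $\mathscr{C}(\mathbb{Z})^-$ — this is exactly the sign discrepancy between the two composites, and it is forced by additivity of the inversion count under concatenation of lattice paths. The passage from $C$ to $N$ is then routine: $N$ is a natural direct summand of $C$ (the degenerate subcomplex splits off), both $AW$ and $\nabla$ respect this splitting, so the identity established for $C$ restricts to $N$.

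The main obstacle is the last step: commuting the front/back faces past the shuffle degeneracies while proving that the leftover operators are exactly the sub-shuffle data, and simultaneously matching the signs. The conceptual content is the lattice-path bijection, but the genuine risk lies in the bookkeeping, since a single misplaced convention (the orientation of $d_\fin$, the order of the factors in $S_\alpha$, or the Koszul rule for the symmetry of $\mathscr{C}(\mathbb{Z})^-$) would spoil the term-by-term cancellation. That $AW\circ\nabla=\mathrm{id}$ already holds on the nose (Proposition~\ref{before}(1)), whereas $\nabla\circ AW$ is only homotopic to the identity, suggests that the present, stronger identity survives strictly precisely because the symmetry and the two sub-cuts conspire to keep every term on the correct side of the bijection.
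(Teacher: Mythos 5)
Your proposal is correct and follows essentially the same route as the paper: both expand the two composites of the bialgebra axiom into explicit sums, commute $d_\fin^{\,s}$ and $d_0^{\,t}$ past the shuffle degeneracies via the simplicial identities to produce the factorizations $d_\fin^{\,s}S_\beta=S_{\beta''}d_\fin^{\,i}$, $d_0^{\,t}S_\beta=S_{\beta'}d_0^{\,j}$, and match the summands through the bijection between (shuffle, Alexander--Whitney cut) and (two cuts, two sub-shuffles). Your lattice-path description of that bijection and your explicit account of why the Koszul sign is absorbed by additivity of the inversion count are welcome amplifications of steps the paper declares ``clear'' or ``left to the reader,'' but they do not constitute a different argument.
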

We start to prove the theorem.

We compute the two arrows $C(X\otimes Y)\otimes C(Z\otimes W)\rightrightarrows C(X\otimes Z)\otimes C(Y\otimes W)$, which figure out in the bialgebra axiom, namely, \eqref{bialgebra1} and \eqref{bialgebra2}. Let $x\in X_k$, $y\in Y_k$, $z\in Z_\ell$, $w\in W_\ell$.

\smallskip

\ \\
{\bf Computation of \eqref{bialgebra2}:}

\begin{equation}\label{computation11}
\begin{aligned}
\ &x^k\otimes y^k\xrightarrow{AW}\sum_{i+j=k}d_\fin^ix^k\otimes d_0^jy^k\\
&z^\ell\otimes w^\ell\xrightarrow{AW}\sum_{a+b=\ell}d_\fin^az^\ell\otimes d_0^bw^\ell
\end{aligned}
\end{equation}

\begin{equation}\label{computation12}
\begin{aligned}
\ &\left(\sum_{i+j=k}d_\fin^ix^k\otimes d_0^jy^k\right)\otimes\left(\sum_{a+b=\ell}d_\fin^az^\ell\otimes d_0^bw^\ell\right)\xrightarrow{\sigma}\\
&\sum_{\substack{i+j=k\\a+b=\ell}}(-1)^{(k-j)(\ell-a)}\left(d_\fin^ix^k\otimes d_\fin^az^\ell\right)\otimes\left(d_0^jy^k\otimes d_0^bw^\ell\right)
\end{aligned}
\end{equation}

The final application of the shuffle map to the r.h.s. of \eqref{computation12} gives:

\begin{equation}\label{computation13}
\sum_{\substack{(k-i, \ell-a)\text{-shuffles }(\mu,\nu)\\
(k-j,\ell-b)\text{-shuffles }(\mu^\prime,\nu^\prime)}}
(-1)^{(\mu,\nu)+(\mu^\prime,\nu^\prime)}\sum_{\substack{i+j=k\\a+b=\ell}}(-1)^{(k-j)(\ell-a)}\left(S_\nu d_\fin^i x^k\otimes S_\mu d_\fin^a z^\ell\right)\otimes\left(S_{\nu^\prime}d_0^jy^k\otimes S_{\mu^\prime}d_0^b w^\ell\right)
\end{equation}
\\
{\bf Computation of \eqref{bialgebra1}:}
\begin{equation}\label{computation21}
(x^k\otimes y^k)\otimes (z^\ell\otimes w^\ell)\xrightarrow{\nabla}\sum_{\substack{(k,\ell)\text{-shuffles }(\alpha,\beta)}}
(-1)^{(\alpha,\beta)}\left(S_\beta x^k\otimes S_\beta y^k\right)\otimes\left(S_\alpha z^\ell\otimes S_\alpha w^\ell\right)
\end{equation}
The permutation $\sigma$ maps the r.h.s. of \eqref{computation21} to
\begin{equation}\label{computation22}
\sum_{\substack{(k,\ell)\text{-shuffles }(\alpha,\beta)}}(-1)^{(\alpha,\beta)}\left(S_\beta x^k\otimes S_\alpha z^\ell\right)\otimes\left(S_\beta y^k\otimes S_\alpha w^\ell\right)
\end{equation}
Finally, the Alexander-Whitney map $AW$ maps the r.h.s. of \eqref{computation22} to
\begin{equation}\label{computation23}
\sum_{\substack{s+t=k+\ell}}\sum_{\substack{(k,\ell)\text{-shuffles }(\alpha,\beta)}}(-1)^{(\alpha,\beta)}\left(d_\fin^sS_\beta x^k\otimes d_\fin^s S_\alpha z^\ell\right)\otimes\left(d_0^tS_\beta y^k\otimes d_0^t S_\alpha w^\ell\right)
\end{equation}

We need to prove that the expressions in \eqref{computation13} and \eqref{computation23} coincide, mod out degenerate simplices. (In fact, as we will see, these expressions coincide on the nose).

We need to move $d_\fin^s$ to the right over $S_\beta$ and $S_\alpha$, and analogously to move $d_0^t$ to the right over $S_\alpha$ and $S_\beta$, in the expression \eqref{computation23}. 
According to \eqref{simplicial}, one has
\begin{equation}\label{dfin}
\begin{aligned}
\ &d_\fin s_\maxx=1\\
&d_\fin s_j=s_jd_\fin \ \ \text{for $j<\maxx$}
\end{aligned}
\end{equation}
(where in the first line $\maxx=\fin-1$, and $\fin$ in the r.h.s. of the second line is for 1 less than $\fin$ in the l.h.s.), and analogously
\begin{equation}\label{d0}
\begin{aligned}
\ &d_0 s_0=1\\
&d_0 s_j=s_{j-1} d_0 \ \ \ \text{for $j>0$}
\end{aligned}
\end{equation}

Let $(\alpha,\beta)$ be a $(k,\ell)$-shuffle, $\alpha=\{\alpha_1<\dots <\alpha_k\}$, $\beta=\{\beta_1<\dots<\beta_\ell\}$, $\alpha,\beta\subset [0,1,\dots, k+\ell-1]$, $\alpha\cap\beta=\varnothing$.

Let $s+t=k+\ell$.

Compute $d_0^tS_\alpha$ and $d_0^tS_\beta$.

From \eqref{d0} we see that
\begin{equation}
d_0^tS_\alpha=S_{\alpha^\prime}d_0^{t-\sharp\alpha^{< t}}
\end{equation}

\begin{equation}
d_0^tS_\beta=S_{\beta^\prime}d_0^{t-\sharp\beta^{<t}}
\end{equation}
for some $\alpha^\prime$, $\beta^\prime$.

Analogously, from \eqref{dfin} we see that

\begin{equation}
d_\fin^sS_\alpha=S_{\alpha^{\prime\prime}}d_\fin^{s-\sharp\alpha^{\ge t}}
\end{equation}

\begin{equation}
d_\fin^sS_\beta=S_{\beta^{\prime\prime}}d_\fin^{s-\sharp\beta^{\ge t}}
\end{equation}
for some $\alpha^{\prime\prime}$, $\beta^{\prime\prime}$.

Here we use notation
\begin{equation}
\alpha^{<t}=\{p|\alpha_p<t\}
\end{equation}
and analogously for $\alpha^{\ge t}$, $\beta^{<t}$, $\beta^{\ge t}$. 

Now set
\begin{equation}
\begin{aligned}
\ &a= s-\sharp \alpha^{\ge t}  \\
&b=t-\sharp \alpha^{<t}    \\
&i= s-\sharp\beta^{\ge t}   \\
&j=  t-\sharp\beta^{<t}  
\end{aligned}
\end{equation}

Then $i+j=k$, $a+b=\ell$, as one immediately sees. 

We have
\begin{lemma}
The pairs $(\alpha^\prime,\beta^\prime)$ and $(\alpha^{\prime\prime},\beta^{\prime\prime})$ are shuffles, for any $s,t$ such that $s+t=k+\ell$.
\end{lemma}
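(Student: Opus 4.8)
The plan is to make the four sets $\alpha',\beta',\alpha'',\beta''$ completely explicit and then to read off the shuffle property from the single fact that $(\alpha,\beta)$ already partitions $\{0,1,\dots,k+\ell-1\}$. Write $N=k+\ell$; since $(\alpha,\beta)$ is a $(k,\ell)$-shuffle we have the disjoint decomposition $\alpha\sqcup\beta=\{0,1,\dots,N-1\}$ with $|\alpha|=k$, $|\beta|=\ell$. First I would prove, by induction on $t$ using the commutation relations for $d_0$ (in particular $d_0s_0=1$ and $d_0s_j=s_{j-1}d_0$), that
\[
d_0^tS_\alpha=S_{\alpha'}\,d_0^{\,t-\sharp\alpha^{<t}},\qquad \alpha'=\{\alpha_p-t \ :\ \alpha_p\ge t\},
\]
i.e.\ $\alpha'$ is obtained from $\alpha$ by discarding the elements $<t$ and shifting the remaining ones down by $t$; symmetrically $\beta'=\{\beta_q-t:\beta_q\ge t\}$. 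Second, by induction on $s$ using the relations for $d_\fin$ (in particular $d_\fin s_\maxx=1$ and $d_\fin s_j=s_jd_\fin$ for $j<\maxx$), and writing $t=N-s$, I would prove
\[
d_\fin^sS_\alpha=S_{\alpha''}\,d_\fin^{\,s-\sharp\alpha^{\ge t}},\qquad \alpha''=\{\alpha_p \ :\ \alpha_p<t\},
\]
i.e.\ $\alpha''$ is simply the bottom part of $\alpha$, with no shift, and likewise $\beta''=\{\beta_q:\beta_q<t\}$.

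The inductive bookkeeping is where the care is needed, and it is the main (though routine) obstacle. The two relations behave asymmetrically: under $d_0$ each passage lowers every surviving degeneracy index by one and annihilates the index $0$, whereas under $d_\fin$ the indices are left unchanged and it is the top index, equal to the current $\maxx=\fin-1$, that is annihilated; moreover for $d_\fin$ the value of $\fin$ itself drops by one at each step, since the dimension drops. In each inductive step one checks that exactly one degeneracy is killed precisely when the newly exposed boundary index belongs to $\alpha$ --- namely $t\in\alpha$ when passing from $d_0^t$ to $d_0^{t+1}$, and $t-1\in\alpha$ when passing from $d_\fin^s$ to $d_\fin^{s+1}$ --- and that this simultaneously accounts for the change of the surviving exponent ($t-\sharp\alpha^{<t}$, resp.\ $s-\sharp\alpha^{\ge t}$) and for the update of the set $\alpha'$, resp.\ $\alpha''$. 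All of this is uniform in the splitting $s+t=N$, which is exactly the generality the lemma asks for.

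Granting these two normal forms, the lemma is immediate, and this is the easy part. The translation $x\mapsto x-t$ is an order isomorphism $\{t,t+1,\dots,N-1\}\to\{0,1,\dots,s-1\}$ (recall $s=N-t$), and it carries the disjoint union $\alpha^{\ge t}\sqcup\beta^{\ge t}=\{t,\dots,N-1\}$ to $\alpha'\sqcup\beta'$; hence $\alpha'\sqcup\beta'=\{0,1,\dots,s-1\}$ with $\alpha'\cap\beta'=\varnothing$, so $(\alpha',\beta')$ is a shuffle (of type $(\sharp\alpha^{\ge t},\sharp\beta^{\ge t})$). Likewise, intersecting $\alpha\sqcup\beta=\{0,\dots,N-1\}$ with the initial segment $\{0,1,\dots,t-1\}$ gives $\alpha''\sqcup\beta''=\{0,1,\dots,t-1\}$ with $\alpha''\cap\beta''=\varnothing$, so $(\alpha'',\beta'')$ is a shuffle as well. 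Thus the only genuine content is the normal-form computation; the shuffle property itself is nothing more than the observation that restricting a partition of $\{0,\dots,N-1\}$ to an initial segment, or to a final segment followed by the evident shift, again yields a partition of a standard ordered set.
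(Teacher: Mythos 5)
Your proof is correct, and it follows the only natural route: the paper itself gives no argument for this lemma (it is dismissed with ``It is clear''), and what you supply --- the explicit normal forms $\alpha'=\{\alpha_p-t:\alpha_p\ge t\}$ and $\alpha''=\{\alpha_p:\alpha_p<t\}$ (likewise for $\beta$), followed by the remark that restricting the partition $\alpha\sqcup\beta=\{0,1,\dots,k+\ell-1\}$ to the initial segment $\{0,\dots,t-1\}$, or to the final segment $\{t,\dots,k+\ell-1\}$ shifted down by $t$, again yields a partition of a standard interval --- is precisely the verification the author is taking for granted. Your bookkeeping of the asymmetry between the two commutations (indices drop by one and the bottom degeneracy is killed under $d_0$; indices are unchanged and the top degeneracy, at the current $\maxx$, is killed under $d_\fin$) is consistent with the relations displayed just before the lemma, and the surviving exponents $t-\sharp\alpha^{<t}$ and $s-\sharp\alpha^{\ge t}$ that fall out of your induction agree with those asserted in the surrounding computation.
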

It is clear.\qed

We established, mod out the sign, a 1-1 correspondence between the summand of \eqref{computation13} and \eqref{computation23}.
The signs is a straightforward check, which is left to the reader.

\section{\sc Diagrams}\label{diagrams}

\subsection{\sc Colax-monoidal structure on a functor}\label{colax}
\begin{defn}[{\rm Colax-monoidal functor}]\label{colax_intro}{\rm
Let $\mathscr{M}_1$ and $\mathscr{M}_2$ be two strict associative monoidal categories. A functor $F\colon\mathscr{M}_1\to \mathscr{M}_2$ is called {\it colax-monoidal} if there is a map of bifunctors $\beta_{X,Y}\colon F(X\otimes Y)\to F(X)\otimes F(Y)$ and a morphism $\alpha\colon F(1_{\mathscr{M}_1})\to 1_{\mathscr{M}_2}$ such that:

(1): for any three objects $X,Y,Z\in\Ob(\mathscr{M}_1)$, the diagram
\begin{equation}\label{colaxdiagram}
\xymatrix{
&F(X\otimes Y)\otimes F(Z)\ar[rd]^{\beta_{X,Y}\otimes\id_{F(Z)}}\\
F(X\otimes Y\otimes Z)\ar[ru]^{\beta_{X\otimes Y,Z}}\ar[rd]_{\ \ \ \ \ \ \ \beta_{X,Y\otimes Z}}&& F(X)\otimes F(Y)\otimes F(Z)\\
&F(X)\otimes F(Y\otimes Z)\ar[ru]_{\id_{F(X)}\otimes\beta_{Y,Z}}
}
\end{equation}
is commutative. The functors $\beta_{X,Y}$ are called the {\it colax-monoidal maps}.

(2): for any $X\in\Ob\mathscr{M}_1$ the following two diagrams are commutative
\begin{equation}
\begin{aligned}
\ &\xymatrix{
F(1_{\mathscr{M}_1}\otimes X)\ar[d]\ar[r]^{\beta_{1,X}}& F(1_{\mathscr{M}_1})\otimes F(X)\ar[d]^{\alpha\otimes\id}\\
F(X)&      1_{\mathscr{M}_2}\otimes F(X)\ar[l]
} &
\xymatrix{
F(X\otimes 1_{\mathscr{M}_1})\ar[d]\ar[r]^{\beta_{X,1}}& F(X)\otimes F(1_{\mathscr{M}_1})\ar[d]^{\id\otimes \alpha}\\
F(X)&     F(X)\otimes 1_{\mathscr{M}_2}\ar[l]
}
\end{aligned}
\end{equation}
}
\end{defn}

\subsection{\sc Lax-monoidal structure on a functor}\label{lax}
\begin{defn}[{\rm Lax-monoidal functor}]\label{lax_intro}{\rm
Let $\mathscr{M}_1$ and $\mathscr{M}_2$ be two strict associative monoidal categories. A functor $G\colon\mathscr{M}_1\to \mathscr{M}_2$ is called {\it lax-monoidal} if there is a map of bifunctors $\gamma_{X,Y}\colon G(X)\otimes G(Y)\to G(X\otimes Y)$ and a morphism $\kappa\colon 1_{\mathscr{M}_2}\to G(1_{\mathscr{M}_1})$ such that:

(1): for any three objects $X,Y,Z\in\Ob(\mathscr{M}_1)$, the diagram
\begin{equation}\label{colaxdiagram}
\xymatrix{
&G(X\otimes Y)\otimes G(Z)\ar[dl]_{\gamma_{X\otimes Y,Z}}\\
G(X\otimes Y\otimes Z)&& G(X)\otimes G(Y)\otimes G(Z)\ar[ul]_{\gamma_{X, Y}\otimes\id_{G(Z)}}\ar[dl]^{\id_{G(X)}\otimes\gamma_{Y,Z}}\\
&G(X)\otimes G(Y\otimes Z)\ar[ul]^{\gamma_{X,Y\otimes Z}}
}
\end{equation}
is commutative. The functors $\gamma_{X,Y}$ are called the {\it lax-monoidal maps}.

(2): for any $X\in\Ob\mathscr{M}_1$ the following two diagrams are commutative
\begin{equation}
\begin{aligned}
\ &\xymatrix{
F(1_{\mathscr{M}_1}\otimes X)\ar[d]& F(1_{\mathscr{M}_1})\otimes F(X)\ar[l]_{\gamma_{1,X}}\\
F(X)&      1_{\mathscr{M}_2}\otimes F(X)\ar[u]_{\kappa\otimes\id}\ar[l]
} &
\xymatrix{
F(X\otimes 1_{\mathscr{M}_1})\ar[d]& F(X)\otimes F(1_{\mathscr{M}_1})\ar[l]_{\gamma_{X,1}}\\
F(X)&     F(X)\otimes 1_{\mathscr{M}_2}\ar[u]_{\id\otimes \kappa}\ar[l]
}
\end{aligned}
\end{equation}
}
\end{defn}

\subsection{\sc Bialgebra axiom}\label{bialg}
This axiom, expressing a compatibility between the lax-monoidal and colax-monoidal structures on a functor between  {\it symmetric} monoidal categories, seems to be new.
\begin{defn}[Bialgebra axiom]{\rm
Suppose there are given both colax-monoidal and lax-monoidal structures on a functor $F\colon \mathscr{C}\to\mathscr{D}$, where $\mathscr{C}$ and $\mathscr{D}$ are strict {\it symmetric} monoidal categories.
Denote these structures by $c_F(X,Y)\colon F(X\otimes Y)\to F(X)\otimes F(Y)$, and $l_F\colon F(X)\otimes F(Y)\to F(X\otimes Y)$.
We say that the pair $(l_F,c_F)$ satisfies the bialgebra axiom, if for any for objects $X,Y,Z,W\in\Ob\mathscr{C}$, the following two morphisms
$F(X\otimes Y)\otimes F(Z\otimes W)\to F(X\otimes Z)\otimes F(Y\otimes W)$
coincide:
\begin{equation}\label{bialgebra1}
\begin{aligned}
\ &F(X\otimes Y)\otimes F(Z\otimes W)\xrightarrow{l_F}F(X\otimes Y\otimes Z\otimes W)\xrightarrow{F(\id\otimes \sigma\otimes \id)}\\
&F(X\otimes Z\otimes Y\otimes W)\xrightarrow{c_F}F(X\otimes Z)\otimes F(Y\otimes W)
\end{aligned}
\end{equation}
and
\begin{equation}\label{bialgebra2}
\begin{aligned}
\ &F(X\otimes Y)\otimes F(Z\otimes W)\xrightarrow{c_F\otimes c_F}F(X)\otimes F(Y)\otimes F(Z)\otimes F(W)\xrightarrow{\id\otimes \sigma \otimes \id}\\
& F(X)\otimes F(Z)\otimes F(Y)\otimes F(W)\xrightarrow{l_F\otimes l_F}F(X\otimes Z)\otimes F(Y\otimes W)
\end{aligned}
\end{equation}
where $\sigma$ denotes the symmetry morphisms in $\mathscr{C}$ and in $\mathscr{D}$.

Thus, the commutative diagram, expressing the bialgebra axiom, is
\begin{equation}
\xymatrix{
F(X\otimes Y)\otimes F(Z\otimes W)\ar@/^2pc/[rr]^{\eqref{bialgebra1}}\ar@/_2pc/[rr]_{\eqref{bialgebra2}}&&F(X\otimes Z)\otimes F(Y\otimes W)
}
\end{equation}
}
\end{defn}

\bigskip

\noindent {\sc Max-Planck Institut f\"{u}r Mathematik, Vivatsgasse 7, 53111 Bonn,\\
GERMANY}

\bigskip

\noindent {\em E-mail address\/}: {\tt borya$\_$port@yahoo.com}

\end{document}